\newtheorem{theorem}{Theorem}[section]
\newtheorem{defin}[theorem]{Definition}
\newtheorem{lemma}[theorem]{Lemma}
\newtheorem{conj}{Conjecture}
\newtheorem{nt}{Note}
\newcommand{\singlespacing}{\let\CS=\@currsize\renewcommand{\baselinestretch}{1}\tiny\CS}
\newcommand{\oneandahalfspacing}{\let\CS=\@currsize\renewcommand{\baselinestretch}{1.25}\tiny\CS}
\newcommand{\doublespacing}{\let\CS=\@currsize\renewcommand{\baselinestretch}{1.35}\tiny\CS}
\newtheorem{proposition}[theorem]{Proposition}
\newtheorem{rule-def}[theorem]{Rule}
\begin{document}

\baselineskip 16pt

\vspace*{3cm}

\begin{center}
 {\Large \bf On Sombor Index of Graphs}\\

  \vspace{8mm}

{\bf Batmend Horoldagva$^{a,}\footnote{Corresponding
		author}$,  Chunlei Xu$^{a, b}$ }

 \vspace{6mm}

 \baselineskip=0.20in

 $^a${\it Department of Mathematics, Mongolian National University of Education,\\
 Baga toiruu-14, Ulaanbaatar 48, Mongolia\/}\\
  {\tt horoldagva@msue.edu.mn}\\[2mm]

 $^b${\it School of Mathematics and Physics, Inner Mongolia University for Nationalities, \\
 Tongliao, People's Republic of China\/} \\
 {\tt xuchunlei1981@sina.cn}\\[2mm]

 \vspace{3mm}
 \vspace{3mm}
 \end{center}
 
 \baselineskip=0.23in
\begin{abstract}

\vspace{3mm}
Recently, Gutman  defined a new vertex-degree-based graph invariant, named the Sombor index $SO$ of a graph $G$, and   is defined by
$$SO(G)=\sum_{uv\in E(G)}\sqrt{d_G(u)^2+d_G(v)^2},$$
where $d_G(v)$ is the degree of the vertex $v$ of  $G$.  
In this paper, we obtain the sharp lower and upper bounds on $SO(G)$ of a connected graph, and characterize graphs for which these bounds are attained.

\bigskip

\end{abstract}

\section{Introduction}

Let $G$  be a connected graph with vertex set $V(G)$ and edge set $E(G)$. The order of  $G$ is denoted by $n$.  The degree
of the vertex $v$ is denoted by  $d_G(v)$.  
For $v\in V(G)$, $N_G(v)$ denotes the set of all neighbors of $v$. An edge $uv$  of a graph $G$ is called a  cut edge  if the graph $G-uv$ is disconnected.   For $uv\in E(G)$, denote by $G-uv$ the subgraph of $G$ obtained from $G$ by deleting the edge $uv$.   For two nonadjacent vertices $u$ and $v$ of $G$, denote by $G+uv$  the graph obtained from $G$ by adding the edge $uv$. The girth of a graph $G$ is the length of the shortest cycle which is contained in $G$. The maximum degree of $G$ is denoted by $\Delta$. The complete graph and the cycle of order $n$ are denoted by $K_n$ and $C_n$, respectively. The clique number of a graph $G$ is the maximal order of a complete
subgraph of $G$.

\vspace{3mm}
Gutman \cite{G}  defined a new vertex-degree-based graph invariant, named "Sombor index" of a graph $G$, denoted by $SO(G)$ and is defined by
$$SO(G)=\sum_{uv\in E(G)}\sqrt{d_G(u)^2+d_G(v)^2}.$$
Mathematical  properties and applications of $SO$ index were established in \cite{G}.

\vspace{3mm}
In this paper, we obtain the sharp lower bounds  on $SO(G)$ of a  graph of order $n$ with the maximum degree $\Delta$ and of a graph of order $n$ with girth $g$. 
Also, we give the sharp upper  bound  on $SO(G)$ of a unicyclic  graph of order $n$ with girth $g$.  Very recently, for the  graphs of  order $n$ with $k$  pendent vertices, the graphs were characterized that have the extremal classical Zagreb indices \cite{ET}, multiplicative sum Zagreb index \cite{HXBD}, and reduced second Zagreb index \cite{HSBD}.  
Hence, furthermore, we obtain the sharp upper bound  on $SO(G)$ of a  graph of order $n$ with $k$ pendent vertices ($r$ cut edges).  
Moreover, the corresponding extremal graphs are characterized   for which all the above bounds are attained.

\vspace{3mm}
\section{Graphs with minimum Sombor  index}
In this section, we study the graphs with minimum Sombor index.  Let  $P=uu_1u_2\cdots u_k$ be a path of length $k$ in $G$ such that $d_G(u)\geq 3$, $d_G(u_k)=1$ and $d_G(u_i)=2$ for $i=1,2,\dots, k-1$. Then it is called
 a pendent path in $G$,
$u$ and $k$ are  called the origin and the length of $P$. 
Let  us consider a function $\theta(t)=\sqrt{t^2+4}-\sqrt{t^2+1}$ and one can easily see  that  $\theta(t)$ is decreasing on   $[0, +\infty)$.

 \begin{lemma}\label{lemma31} Let $P$ and $Q$ be two pendent paths with origins $u$ and $v$ in graph $G$, respectively.  Let $x$ be a neighbor vertex of $u$ who lies on  $P$  and $y$ be the pendent vertex on  $Q$.  Denote $G^\prime =G-ux+xy$.  
  Then $SO(G)>SO(G^\prime)$.
\end{lemma}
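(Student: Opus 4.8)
The plan is to write $SO(G)-SO(G')$ as a sum over the edges whose Sombor contribution changes when $ux$ is replaced by $xy$, and to show this sum is positive. First I fix notation. The pendent paths $P,Q$ are distinct, hence internally vertex-disjoint; since $x$ is the neighbour of $u$ lying on $P=uu_1u_2\cdots u_k$ we have $x=u_1$, with $d_G(u_1)=2$ if $k\ge 2$ and $d_G(u_1)=1$ if $k=1$, so $d_G(u_1)\le 2$ in all cases. Write $d=d_G(u)\ (\ge 3)$ and let $w$ be the unique neighbour of the pendent vertex $y$ in $G$. Since $d_G(u)\ge 3$ forces $|V(G)|\ge 4$ and $d_G(y)=1$, connectedness of $G$ gives $d_G(w)\ge 2$. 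In $G'=G-ux+xy$ exactly two degrees change, $d_{G'}(u)=d-1\ (\ge 2)$ and $d_{G'}(y)=2$, while $d_{G'}(u_1)=d_G(u_1)$ and every other degree is unchanged; hence the only edges with altered Sombor contribution are $uu_1$ (deleted), $u_1y$ (inserted), the $d-1$ edges joining $u$ to $N_G(u)\setminus\{u_1\}$, and the edge $yw$.

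I would then split into two cases according to whether $u$ and $y$ are adjacent. In the main case $uy\notin E(G)$ --- this covers $u\ne v$, and also $u=v$ with $Q$ of length at least $2$ --- the neighbour $w$ of $y$ differs from both $u$ and $u_1$, and summing the changes edge by edge gives
\begin{align*}
SO(G)-SO(G')=\; & \sqrt{d^2+d_G(u_1)^2}-\sqrt{d_G(u_1)^2+4}+\sqrt{1+d_G(w)^2}-\sqrt{4+d_G(w)^2}\\
& {}+\sum_{z\in N_G(u)\setminus\{u_1\}}\Bigl(\sqrt{d^2+d_G(z)^2}-\sqrt{(d-1)^2+d_G(z)^2}\Bigr).
\end{align*}
The sum is positive term by term and may be discarded. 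Rationalising the first difference as $\frac{d^2-4}{\sqrt{d^2+d_G(u_1)^2}+\sqrt{d_G(u_1)^2+4}}$ and using $d_G(u_1)^2\le 4$ shows it is at least $\sqrt{d^2+4}-\sqrt 8$; and $\sqrt{4+d_G(w)^2}-\sqrt{1+d_G(w)^2}=\theta(d_G(w))\le\theta(2)=\sqrt 8-\sqrt 5$ because $\theta$ is decreasing and $d_G(w)\ge 2$. Hence $SO(G)-SO(G')>\sqrt{d^2+4}+\sqrt 5-2\sqrt 8\ge\sqrt{13}+\sqrt 5-2\sqrt 8>0$, the last step because $(\sqrt{13}+\sqrt 5)^2=18+2\sqrt{65}>32=(2\sqrt 8)^2$.

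In the remaining case $uy\in E(G)$, the unique neighbour $w$ of $y$ must be $u$, so $v=u$ and $Q$ is the single edge $uy$; now $uy$ itself is among the altered edges and
\begin{align*}
SO(G)-SO(G')=\; & \sqrt{d^2+d_G(u_1)^2}-\sqrt{d_G(u_1)^2+4}+\sqrt{d^2+1}-\sqrt{(d-1)^2+4}\\
& {}+\sum_{z\in N_G(u)\setminus\{u_1,y\}}\Bigl(\sqrt{d^2+d_G(z)^2}-\sqrt{(d-1)^2+d_G(z)^2}\Bigr).
\end{align*}
All three pieces are positive for $d\ge 3$: the first since $d^2\ge 9>4$, the second since $d^2+1-\bigl((d-1)^2+4\bigr)=2d-4\ge 2$, and the sum term by term. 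Thus $SO(G)>SO(G')$ in every case.

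The genuine obstacle is the main case: the move produces both decreases (at the $d-1$ edges that remain at $u$) and increases (the edge $yw$ rises from degree pair $(1,d_G(w))$ to $(2,d_G(w))$, and the heavy edge $uu_1$ with pair $(d,d_G(u_1))$ is exchanged for the lighter $u_1y$ with pair $(d_G(u_1),2)$), so a careless comparison fails --- the decrease at $u$ can be as small as roughly $\sqrt{10}$ while the cost near $y$ can approach $\sqrt 8+\sqrt 5$, which is larger. The resolution is the pairing above, exploiting that a pendent path forces $d_G(u_1)\le 2$ (making the deleted edge heavy and the inserted edge light) together with $d_G(w)\ge 2$, after which the whole inequality collapses to the numerical fact $\sqrt{13}+\sqrt 5>2\sqrt 8$.
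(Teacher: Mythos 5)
Your proof is correct and takes essentially the same route as the paper's: an edge-by-edge decomposition of $SO(G)-SO(G^\prime)$, discarding the strictly positive contributions from the remaining edges at $u$, and using $d_G(x)\le 2$, $d_G(w)\ge 2$ and the monotonicity of $\theta$ to reduce everything to the numerical fact $\sqrt{13}+\sqrt{5}>2\sqrt{8}$. Your organization by whether $uy\in E(G)$ (merging the paper's cases $u\neq v$ and $u=v$ with $Q$ long) and the rationalization that treats $d_G(x)\in\{1,2\}$ uniformly are minor streamlinings of the paper's case analysis, not a different method.
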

\begin{proof} Let $z$ be the neighbor vertex of $y$ in $G$. Suppose first that
 $u\neq v$. Then
\begin{eqnarray}
&&SO(G)-SO(G^\prime)\nonumber\\
&&=\sum_{w\in N_G(u)\setminus x}\sqrt{d_G(u)^2+d_G(w)^2}+
\sqrt{d_G(u)^2+d_G(x)^2}+\sqrt{1+d_G(z)^2}\nonumber\\
&&-\sum_{w\in N_G(u)\setminus x}\sqrt{(d_G(u)-1)^2+d_G(w)^2}-
\sqrt{2^2+d_G(x)^2}-\sqrt{2^2+d_G(z)^2}\nonumber\\
&&>\sqrt{d_G(u)^2+d_G(x)^2}+\sqrt{1+d_G(z)^2}-\sqrt{2^2+d_G(x)^2}-\sqrt{2^2+d_G(z)^2}.\label{e1}
\end{eqnarray}

Suppose now that $u=v$.  If  the length of $Q$ is equal to one, then $u=z$ and
\begin{eqnarray}
&&SO(G)-SO(G^\prime)\nonumber\\
&&=\sum_{w\in N_G(u)\setminus \{x, y\}}\sqrt{d_G(u)^2+d_G(w)^2}+\sqrt{d_G(u)^2+1}+\sqrt{d_G(u)^2+d_G(x)^2}\nonumber\\
&&-\sum_{w\in N_G(u)\setminus \{x, y\}}\sqrt{(d_G(u)-1)^2+d_G(w)^2}-\sqrt{(d_G(u)-1)^2+2^2}-\sqrt{2^2+d_G(x)^2}\nonumber\\
&&>\sqrt{d_G(u)^2+d_G(x)^2}-\sqrt{2^2+d_G(x)^2}\geq 0\nonumber
\end{eqnarray}
since $d_G(u)\geq 2$. If the length of $Q$ is greater than one and let $x^\prime$ be the neighbor of $u$ on path $Q$. Then
\begin{eqnarray}
&&SO(G)-SO(G^\prime)\nonumber\\
&&=\sum_{w\in N_G(u)\setminus \{x,x^\prime\}}\sqrt{d_G(u)^2+d_G(w)^2}+\sqrt{d_G(u)^2+d_G(x^\prime)^2}\nonumber\\
&&+\sqrt{d_G(u)^2+d_G(x)^2}+\sqrt{1+d_G(z)^2}\nonumber\\
&&-\sum_{w\in N_G(u)\setminus \{x,x^\prime\}}\sqrt{(d_G(u)-1)^2+d_G(w)^2}-\sqrt{(d_G(u)-1)^2+d_G(x^\prime)^2}\nonumber\\
&&-\sqrt{2^2+d_G(x)^2}-\sqrt{2^2+d_G(z)^2}\nonumber\\
&&>\sqrt{d_G(u)^2+d_G(x)^2}+\sqrt{1+d_G(z)^2}-\sqrt{2^2+d_G(x)^2}-\sqrt{2^2+d_G(z)^2}.\label{e2}
\end{eqnarray}
 Therefore from the  inequalities (\ref{e1}) or  (\ref{e2}), it follows that
\begin{eqnarray}
SO(G)-SO(G^\prime)>\sqrt{9+d_G(x)^2}-\sqrt{4+d_G(x)^2}-\theta(2)\label{eq1}
\end{eqnarray}
since $d_G(u)\geq 3$,  $d_G(z)\geq 2$ and $\theta(t)$ is decreasing. Clearly $d_G(x)\leq 2$. If $d_G(x)=1$ then we have $SO(G)>SO(G^\prime)$ from (\ref{eq1}). If  $d_G(x)=2$ then we also get
$SO(G)-SO(G^\prime)>\sqrt{13}-\sqrt{8}-\theta(2)>0$  from (\ref{eq1}).
\end{proof}

A tree is said to be star-like if it has exactly one vertex of degree greater than two. Connected graphs of order $n$  with the maximum degree at most two are only $P_n$ and $C_n$. 
In \cite{G},  it has been proved that $SO(G)> SO(P_n)$  for any connected graph  $G$ of order $n$.  Therefore we consider a graph $G$ which is different from  $P_n$ and $C_n$.

\begin{theorem}
Let $G$ be a connected graph of order $n$ with maximum degree $\Delta\geq 3$. Then

\noindent 
\rm{(i)}  If  $2\Delta\leq n-1$ then   
	\begin{equation}
SO(G)\geq \Delta(\sqrt{\Delta^2+4}+\sqrt{5})+2(n-2\Delta-1)\sqrt{2}\label{e01}
\end{equation}
with equality  holds if and only if $G$ is isomorphic to  a star-like tree of order $n$ with maximum degree $\Delta$ in which  all neighbors of the maximum degree vertex have degree two.

\noindent
\rm{(ii)} If  $2\Delta> n-1$ then   
\begin{equation}
SO(G)\geq (n-1-\Delta)(\sqrt{\Delta^2+4}+\sqrt{5})+(2\Delta-n+1)\sqrt{\Delta^2+1}\label{e02}
\end{equation} 
with equality  holds  if and only if $G$ is isomorphic to  a star-like tree of order $n$ with maximum degree $\Delta$ in which the maximum degree vertex has exactly $2\Delta-n+1$ pendent neighbors.
\end{theorem}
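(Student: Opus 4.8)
The plan is to squeeze an arbitrary connected minimiser down to a star-like tree in two reduction steps, and then to optimise among star-like trees by an explicit computation. \emph{Step 1: the minimiser is a tree.} First I would note that deleting an edge strictly decreases $SO$: if $e=uv\in E(G)$ then the term $\sqrt{d_G(u)^2+d_G(v)^2}$ is lost, while for each remaining neighbour $w$ of $u$ the term $\sqrt{d_G(u)^2+d_G(w)^2}$ becomes $\sqrt{(d_G(u)-1)^2+d_G(w)^2}$ and so does not increase (and likewise at $v$). Now if $G$ contains a cycle $C$, fix a vertex $w$ with $d_G(w)=\Delta$; since $|E(C)|\ge 3$ and $w$ meets at most two edges of $C$, some $e\in E(C)$ avoids $w$, so $G-e$ is connected, still has maximum degree $\Delta$, and has strictly smaller Sombor index. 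Iterating, a minimiser is a tree of maximum degree $\Delta$.

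\emph{Step 2: the minimiser is a star-like tree.} Let $T$ be a tree of order $n$ with $\Delta(T)=\Delta\ge 3$ that is not star-like, so it has at least two branch vertices (vertices of degree $\ge 3$). Let $F$ be the minimal subtree of $T$ containing all branch vertices; then $F$ has at least two vertices, hence at least two leaves, and every leaf of $F$ must itself be a branch vertex (otherwise it could be deleted from $F$). Fixing $w$ with $d_T(w)=\Delta$, choose a leaf $u$ of $F$ with $u\ne w$ (possible since at most one leaf equals $w$). In $T-u$ exactly one component meets $F-u$ and therefore contains every branch vertex other than $u$, so every other component of $T-u$ is a path; as $d_T(u)\ge 3$ there are at least two such paths, i.e.\ at least two pendent paths of $T$ with origin $u$. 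Applying Lemma~\ref{lemma31} to two of them (the case $u=v$), $SO$ strictly decreases, $d_T(u)$ drops by one, one further degree changes from $1$ to $2$, and $d_T(w)=\Delta$ is untouched; thus we stay among connected graphs of order $n$ with maximum degree $\Delta$. The quantity $\sum_v d_T(v)^2$ decreases by $2d_T(u)-4\ge 2$ at each step while remaining a positive integer, so the process terminates, necessarily at a star-like tree whose centre $w$ has degree $\Delta$.

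\emph{Step 3: optimisation over star-like trees.} Such a tree is $\Delta$ pendent paths at $w$ of lengths $\ell_1,\dots,\ell_\Delta\ge 1$ with $\sum_i\ell_i=n-1$. Put $p=|\{i:\ell_i=1\}|$. A branch of length one contributes $\sqrt{\Delta^2+1}$, and a branch of length $\ell\ge 2$ contributes $\sqrt{\Delta^2+4}+(\ell-2)\cdot 2\sqrt2+\sqrt5$ (one edge from $w$ to a degree-$2$ vertex, then $\ell-2$ edges joining degree-$2$ vertices, then one edge to a leaf); summing and using $\sum_{\ell_i\ge 2}(\ell_i-2)=n-1+p-2\Delta$ gives
\[
SO(T)=\Delta\bigl(\sqrt{\Delta^2+4}+\sqrt5\,\bigr)+2(n-2\Delta-1)\sqrt2+p\bigl(2\sqrt2-\sqrt5-\theta(\Delta)\bigr).
\]
Since $\theta$ is decreasing, $\theta(\Delta)\le\theta(3)=\sqrt{13}-\sqrt{10}<2\sqrt2-\sqrt5$, so $SO(T)$ is strictly increasing in $p$ and is minimised by the smallest feasible $p$. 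Because $\Delta-p$ of the branches have length $\ge 2$ and sum to $n-1-p$, feasibility forces $p\ge\max\{0,\,2\Delta-n+1\}$. If $2\Delta\le n-1$ the minimum is at $p=0$, giving (\ref{e01}), with equality precisely when every neighbour of the maximum-degree vertex has degree $2$. If $2\Delta>n-1$ the minimum is at $p=2\Delta-n+1$, where $n-1+p-2\Delta=0$, and substituting gives (\ref{e02}), with equality precisely when the maximum-degree vertex has exactly $2\Delta-n+1$ pendent neighbours. Finally, since Steps 1 and 2 are strict, any $G$ attaining either bound is already a star-like tree of maximum degree $\Delta$ with the optimal value of $p$, which yields the stated characterisations.

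The main obstacle is Step 2: Lemma~\ref{lemma31} always lowers a degree, so one must be careful to apply it only at a branch vertex $u\ne w$ (a leaf of the subtree spanned by the branch vertices) so as not to lose the constraint $\Delta(G)=\Delta$, and one must supply a quantity certifying termination of the iteration. The remaining points are routine, in particular checking $2\sqrt2-\sqrt5-\theta(\Delta)>0$ for every $\Delta\ge 3$, and observing that in case (i) the extremal value does not depend on how the branch lengths $\ge 2$ are distributed, consistent with the characterisation not pinning them down.
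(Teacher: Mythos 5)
Your proposal is correct and follows essentially the same route as the paper: delete non-cut edges avoiding the maximum-degree vertex to reduce to a tree, apply Lemma \ref{lemma31} to reduce to a star-like tree with centre $w$, and then minimise the explicit expression $SO(T)=\Delta(\sqrt{\Delta^2+4}+\sqrt5)+2(n-1-2\Delta)\sqrt2+k(\theta(2)-\theta(\Delta))$ over the number $k$ of pendent neighbours of $w$. You merely make explicit two points the paper leaves terse (locating a branch vertex $u\neq w$ via the subtree spanned by the branch vertices, and certifying termination via $\sum_v d(v)^2$), which is a welcome but not substantively different elaboration.
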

\begin{proof}
Let  $SO(G)$ be minimum in the class of graphs of order $n$ with maximum degree $\Delta$ and  $w$ be the maximum degree vertex of $G$.  If there is  a non-cut edge $xy$  in $G$ such that $x\neq w$ and $y\neq w$,  then $SO(G)>SO(G-xy)$ and it follows that $G$ is a tree. 
Now, we prove that $G$ is isomorphic to a star-like tree of order $n$ with maximum degree $\Delta$. If not there is a pendent path  $uu_1\cdots u_k$ such that $u\neq w$.  Clearly there is a pendent vertex $z$ ($\neq u_k$) in $G$. Then  $SO(G)>SO(G-uu_1+u_1z)$ by Lemma \ref{lemma31}  and it contradicts the fact that $SO(G)$ is minimum. 

Hence $G$ is a star-like tree of order $n$ with maximum degree $\Delta$. Let $k$  be the number of pendent neighbors of $w$.  Then 
\begin{equation}
SO(G)=k(\theta(2)-\theta(\Delta))+\Delta(\sqrt{\Delta^2+4}+\sqrt{5})+2(n-1-2\Delta)\sqrt{2}. \label{e10}
\end{equation}
Since $\theta$ is a decreasing function and $\Delta \geq 3$, we have  $\theta(2)>\theta(\Delta)$. Therefore we distinguish the following two cases.

\vspace{3mm}
\noindent
\rm{(i)}  If  $2\Delta\leq n-1$ then there are star-like trees of order $n$ with maximum degree $\Delta$ such that $k=0$. Hence from (\ref{e10}), we obtain the required result.

\vspace{3mm}
\noindent
\rm{(ii)} If  $2\Delta> n-1$ then  $k\geq 2\Delta-n+1$. Hence from (\ref{e10}), we easily get the inequality (\ref{e02})  and with equality if and only if $G$ is isomorphic to  a star-like tree of order $n$ with maximum degree $\Delta$ in which the maximum degree vertex has exactly $2\Delta-n+1$ pendent neighbors.
\end{proof}

Denote by $C_{n, 1}$  the  graph obtained  by attaching one pendent  edge to a vertex of $C_ {n-1}$.
\begin{theorem}\label{th11} Let  $SO(G)$ be minimum in the class of graphs of order $n$ with girth $g$.  If $G$  is different from $C_n$, then $G$ is isomorphic to the unicyclic graph that has  exactly  one pendent path of length at least two.
\end{theorem}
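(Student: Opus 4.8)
The plan is to take a hypothetical minimizer $G$ of $SO$ over all connected graphs of order $n$ with girth $g$, assume $G\neq C_n$, and then repeatedly apply $SO$-decreasing local modifications that stay inside this class until the only survivor is the claimed graph. The workhorse is the observation that deleting any edge $e=uv$ strictly lowers $SO$: the summand $\sqrt{d_G(u)^2+d_G(v)^2}$ disappears, while every other affected summand can only decrease since the degrees of $u$ and $v$ each drop by one and $\sqrt{t^2+c^2}$ is increasing in $t\ge 0$. Hence, if there is an edge $e$ for which $G-e$ is still connected with girth $g$, then $SO(G)>SO(G-e)$ contradicts minimality; I will combine this with Lemma~\ref{lemma31}.

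\emph{Step 1: $G$ is unicyclic.} Since $g$ is finite, $G$ has a $g$-cycle, so $G$ is not a tree; assume it has at least $n+1$ edges (i.e.\ is not unicyclic), so it carries two independent cycles. By the remark above, no edge $e$ lying on a cycle can have $G-e$ of girth $g$, so the girth of $G-e$ is strictly larger than $g$, meaning every $g$-cycle of $G$ passes through $e$. Apply this to each of the $g$ edges of a fixed $g$-cycle $C$: every $g$-cycle of $G$ must contain all $g$ edges of $C$ and therefore equals $C$, so the $g$-cycle is unique. But since $G$ has more than $n$ edges it has a cycle $D\neq C$, and $D$ has an edge $f\notin E(C)$ (a proper subset of the edge set of a cycle is a disjoint union of paths, never a cycle). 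As $f$ lies on the cycle $D$, every $g$-cycle must use $f$, which is impossible because $C$ avoids $f$. Hence $G$ is unicyclic and its unique cycle has length $g$.

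\emph{Step 2: $G$ has exactly one pendent path, which forces the claimed shape.} As $G\neq C_n$ and $G$ is unicyclic, $G$ is not equal to its own $g$-cycle, so it has a vertex off the cycle and hence at least one pendent path. Suppose it had two, $P$ and $Q$ with origins $u$ and $v$ (possibly $u=v$); let $x$ be the neighbour of $u$ on $P$, let $y$ be the pendent vertex of $Q$, and set $G'=G-ux+xy$ as in Lemma~\ref{lemma31}. The edge $ux$ is not on the $g$-cycle (its endpoint $x$ lies on a pendent path, hence has degree $\le 2$ and is off the cycle), so deleting it leaves the $g$-cycle intact and merely detaches the sub-path of $P$ through $x$; re-adding $xy$ reattaches that sub-path at $y$ as a bridge, creating no new cycle. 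Thus $G'$ is connected of order $n$ with girth $g$, while Lemma~\ref{lemma31} gives $SO(G)>SO(G')$, a contradiction. So $G$ has exactly one pendent path. Since in a unicyclic graph the vertices off the cycle form trees hung on its vertices and each leaf of a nontrivial hung tree is the endpoint of its own pendent path, having exactly one pendent path forces exactly one such tree and that tree to be a path. Hence $G=C_g$ with a single attached pendent path, of length $n-g\ge 1$, and of length at least $2$ unless $n-g=1$, i.e.\ unless $G=C_{n,1}$.

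The main obstacle is the girth bookkeeping: every $SO$-lowering move must be checked to preserve girth \emph{exactly} $g$, not merely $\ge g$. In Step 1 this is precisely what the uniqueness-of-the-$g$-cycle argument handles, and in Step 2 it reduces to the fact that the path-shift of Lemma~\ref{lemma31} neither deletes an edge of the cycle nor creates a short cycle (the re-added edge is a bridge). The remaining points — the edge-deletion monotonicity of $SO$ and the structural description of unicyclic graphs with a single pendent path — are routine.
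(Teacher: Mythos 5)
Your proof is correct and follows essentially the same route as the paper: delete non-cut edges off a $g$-cycle to force $G$ to be unicyclic, then apply the pendent-path transfer of Lemma \ref{lemma31} repeatedly until only one pendent path survives. You are in fact more careful than the paper about the two points it glosses over --- that each $SO$-lowering move preserves the girth, and the exceptional case $n-g=1$ (where the minimizer is $C_{n,1}$ and the unique pendent path has length one), which the paper only addresses implicitly via the computation of $SO(C_{n,1})$.
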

\begin{proof}
Let  $C$ be a cycle of length $g$ and  $xy\notin C$ be a non-cut edge of $G$. Then $SO(G)>SO(G-xy)$ and it follows that $G$ is a unicyclic graph. If $G$ is isomorphic to the unicyclic graph that has  exactly  one pendent path of length at least two, then we have
\begin{equation}
SO(G)\geq \sqrt{5}+3\sqrt{13}+2\sqrt{2}(n-4).\label{eq010}
\end{equation}
 If $G$ is isomorphic to $C_ {n-1}$, then the inequality in (\ref{eq010}) is strict.  Because $SO(C_{n, 1})=2\sqrt{2}(n-3)+2\sqrt{13}+\sqrt{10}$. Otherwise,  repeatedly using  the transformation in Lemma \ref{lemma31}, we get the required result.  
\end{proof}

The following result easily follows from Theorem \ref{th11}. 
\begin{theorem}
Let $G$ be a unicyclic  graph order $n$ which is different from $C_n$.
Then $SO(C_n)<SO(G)$.
\end{theorem}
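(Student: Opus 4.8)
The plan is to derive this quickly from Theorem~\ref{th11} by pinning down the relevant extremal unicyclic graphs and then comparing Sombor indices with a short estimate. First I would let $g$ be the girth of $G$. Since $G$ is unicyclic and $G\ne C_n$, its unique cycle has at most $n-1$ vertices, so $2\le g\le n-1$ (in particular $n\ge 4$). The left-hand side is immediate: every vertex of $C_n$ has degree $2$ and $C_n$ has $n$ edges, hence $SO(C_n)=2\sqrt2\,n$. It therefore suffices to bound $SO$ from below on the class of unicyclic graphs of order $n$ with girth $g<n$.

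Next I would split on the value of $g$. If $g=n-1$, then $G$ is forced to be $C_{n,1}$, and the proof of Theorem~\ref{th11} records $SO(C_{n,1})=2\sqrt2\,(n-3)+2\sqrt{13}+\sqrt{10}$; thus $SO(G)-SO(C_n)=2\sqrt{13}+\sqrt{10}-6\sqrt2$, which is positive because $(2\sqrt{13}+\sqrt{10})^2=62+4\sqrt{130}>62+44=106>72=(6\sqrt2)^2$. If instead $g\le n-2$, then by Theorem~\ref{th11} the graph of order $n$ and girth $g$ with smallest Sombor index is the unicyclic graph carrying exactly one pendent path (necessarily of length $n-g\ge 2$), so by (\ref{eq010}) every such $G$ satisfies $SO(G)\ge \sqrt5+3\sqrt{13}+2\sqrt2\,(n-4)$. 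In this case $SO(C_n)<SO(G)$ is equivalent to $3\sqrt{13}+\sqrt5>8\sqrt2$, which holds since $(3\sqrt{13}+\sqrt5)^2=122+6\sqrt{65}>122+48=170>128=(8\sqrt2)^2$. Either way $SO(C_n)<SO(G)$.

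The argument is essentially routine once Theorem~\ref{th11} is in hand; the only mild obstacle is the boundary case $g=n-1$, where no pendent path of length at least two exists and one must instead invoke the explicit value of $SO(C_{n,1})$ noted in the proof of Theorem~\ref{th11}. I would also make the two numerical comparisons rigorous by squaring, as above, rather than by decimal approximations, and note that the resulting chains $SO(C_n)<\sqrt5+3\sqrt{13}+2\sqrt2\,(n-4)\le SO(G)$ and $SO(C_n)<SO(C_{n,1})=SO(G)$ give strict inequality, which is what is wanted.
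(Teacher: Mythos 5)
Your proof is correct and follows essentially the same route as the paper: invoke Theorem~\ref{th11} to get the lower bound $\sqrt5+3\sqrt{13}+2\sqrt2\,(n-4)$ and compare it with $SO(C_n)=2\sqrt2\,n$. The only difference is that you treat the boundary case $g=n-1$ (where the minimizer is $C_{n,1}$) separately and verify the numerical inequalities by squaring, which makes explicit a detail the paper leaves implicit inside the proof of Theorem~\ref{th11}.
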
	
\begin{proof}
Let $g$ be the girth of $G$. Since $G$ is different from $C_n$, we have 
 $SO(G)\geq \sqrt{5}+3\sqrt{13}+2\sqrt{2}(n-4)$ by Theorem \ref{th11}. From this, we get the required result because $SO(C_n)=2n\sqrt{2}$.
\end{proof}

\section{Graphs with maximum Sombor index}

In this section, we study the graphs with maximum Sombor index. Namely, we obtain the sharp upper  bounds  on $SO$ index of a unicyclic  graph of order $n$ with girth $g$ and  of a  graph of order $n$ with $k$ pendent vertices ($r$ cut edges). 

\begin{lemma}\label{lemma22} ~Let $G$ be a connected graph and  $uv$ be a non-pendent cut edge in $G$.  Denote by $G^\prime$  the graph obtained by the contraction of $uv$ onto the vertex $u$ and adding a pendent vertex $v$ to $u$. Then  $SO(G)< SO(G^\prime)$.
\end{lemma}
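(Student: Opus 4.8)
The plan is to compare $SO(G')$ with $SO(G)$ edge by edge. Write $a=d_G(u)$ and $b=d_G(v)$; since $uv$ is non-pendent, $a\geq 2$ and $b\geq 2$. Because $uv$ is a cut edge, $G-uv$ splits into a component containing $u$ and one containing $v$, so the neighbourhoods $N_G(u)\setminus\{v\}$ and $N_G(v)\setminus\{u\}$ are disjoint; let $d_1,\dots,d_{a-1}$ be the degrees of the vertices in $N_G(u)\setminus\{v\}$ and $e_1,\dots,e_{b-1}$ the degrees of those in $N_G(v)\setminus\{u\}$. Under the transformation every vertex other than $u$ retains its degree (these former neighbours of $u$ and of $v$ become neighbours of the merged vertex, which I still call $u$), $v$ becomes pendent, and the disjointness of the two neighbourhoods gives exactly $d_{G'}(u)=(a-1)+(b-1)+1=a+b-1$ with no multi-edge created. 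Hence every edge not incident with $u$ or $v$ contributes equally to $SO(G)$ and $SO(G')$, and
$$SO(G')-SO(G)=\sqrt{(a+b-1)^2+1}-\sqrt{a^2+b^2}+\sum_{i=1}^{a-1}\Bigl(\sqrt{(a+b-1)^2+d_i^2}-\sqrt{a^2+d_i^2}\Bigr)+\sum_{j=1}^{b-1}\Bigl(\sqrt{(a+b-1)^2+e_j^2}-\sqrt{b^2+e_j^2}\Bigr).$$

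Next I would verify that each piece is nonnegative and at least one is strictly positive. Since $a,b\geq 2$, we have $a+b-1\geq a+1>a$ and $a+b-1\geq b+1>b$, so every summand in the two sums is strictly positive. For the remaining term I would use the elementary identity $(a+b-1)^2+1=a^2+b^2+2(a-1)(b-1)$, whence $(a+b-1)^2+1-(a^2+b^2)=2(a-1)(b-1)\geq 2>0$ because $a,b\geq 2$, so that $\sqrt{(a+b-1)^2+1}>\sqrt{a^2+b^2}$. Combining these observations yields $SO(G')-SO(G)>0$, which is the assertion.

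There is essentially no obstacle here: the proof reduces to one rearrangement of the defining sum of $SO$ together with the identity $(a+b-1)^2+1=a^2+b^2+2(a-1)(b-1)$. The only point deserving care is the bookkeeping of which edges change — specifically invoking that $uv$ is a cut edge to guarantee $N_G(u)\setminus\{v\}$ and $N_G(v)\setminus\{u\}$ are disjoint, so that the contraction produces a simple graph and the count $d_{G'}(u)=a+b-1$ is exact rather than merely an upper bound.
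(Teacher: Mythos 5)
Your proof is correct and follows essentially the same route as the paper: both compare $SO(G')$ and $SO(G)$ edge by edge, observe that the terms coming from the former neighbours of $u$ and $v$ only increase, and settle the remaining term via the identity $(a+b-1)^2+1-(a^2+b^2)=2(a-1)(b-1)>0$ (the paper writes this as $2st>0$ with $a=s+1$, $b=t+1$). Your explicit remark that the cut-edge hypothesis makes the two neighbourhoods disjoint, so the contracted vertex has degree exactly $a+b-1$, is a point the paper leaves implicit.
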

\begin{proof} Let $N_{G}(u)\setminus \{v\}=\{u_1,u_2,\dots, u_s\}$ and $N_{G}(v)\setminus \{u\}=\{v_1,v_2,\dots, v_t\}$, then $d_G(u)=s+1$ and $d_G(v)=t+1$. 
Since $uv$ is a non-pendent cut edge of $G$, we have $st>0$.  Hence, by the definition of $SO$, we obtain
	\begin{eqnarray}SO(G^\prime)-SO(G)&=&\sum_{i=1}^s\sqrt{(s+t+1)^2+d_{G}(u_i)^2}-\sum_{i=1}^s\sqrt{(s+1)^2+d_{G}(u_i)^2}\nonumber\\
	&+&\sum_{j=1}^t\sqrt{(s+t+1)^2+d_{G}(v_j)^2}-\sum_{j=1}^t\sqrt{(t+1)^2+d_{G}(v_j)^2}\nonumber\\
	&+&\sqrt{(s+t+1)^2+1}-\sqrt{(s+1)^2+(t+1)^2}\nonumber\\
	&>&\sqrt{(s+t+1)^2+1}-\sqrt{(s+1)^2+(t+1)^2}\nonumber
	\end{eqnarray}
	and it follows that $SO(G)< SO(G^\prime)$ since $[(s+t+1)^2+1]-[(s+1)^2+(t+1)^2]=2st>0$.
\end{proof}
\begin{proposition}\label{prop1}
	Let $G$ be a connected graph of order $n$ with $k$ cut edges.  If $SO(G)$  is maximum in the class of graphs of order $n$ with $k$ cut edges, then all $k$ cut edges of $G$ are pendent. 
\end{proposition}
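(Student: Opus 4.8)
The plan is to argue by contradiction, exploiting the transformation of Lemma \ref{lemma22}. Suppose $SO(G)$ is maximum among connected graphs of order $n$ with exactly $k$ cut edges, and suppose, for contradiction, that $G$ has a cut edge $uv$ that is not pendent (so $d_G(u)\geq 2$ and $d_G(v)\geq 2$). Let $G^\prime$ be the graph obtained from $G$ by contracting $uv$ onto $u$ and attaching a new pendent vertex $v$ to $u$, exactly as in Lemma \ref{lemma22}; then $SO(G)<SO(G^\prime)$. To derive a contradiction it suffices to verify that $G^\prime$ again belongs to the class of connected graphs of order $n$ with exactly $k$ cut edges, and this verification is the real content of the proof.

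First I would record the easy part: $G^\prime$ is a simple connected graph of order $n$. Since $uv$ is a cut edge, $u$ and $v$ have no common neighbour (a common neighbour would yield a $u$–$v$ path avoiding $uv$), so the contraction produces no multiple edges; contracting an edge and attaching a pendent vertex each preserve connectedness; and the order is unchanged because contraction removes one vertex while the new pendent vertex restores it.

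The key step is to show that $G^\prime$ has exactly $k$ cut edges. Let $A$ and $B$ be the components of $G-uv$ containing $u$ and $v$ respectively, so every cut edge of $G$ other than $uv$ lies wholly in $A$ or wholly in $B$; since $B$ meets $A$ only through $u$ (and $A$ meets $B$ only through $v$), an edge of $A$ is a cut edge of $G$ iff it is a cut edge of the subgraph $A$, and similarly for $B$. In $G^\prime$ the vertex set of $A$ still induces $A$, while the vertices of $B-v$ together with the merged vertex $u$ span a copy $B^\prime$ of $B$ (with $v$ relabelled $u$), and in $G^\prime$ both $A$ and $B^\prime$ are attached to the remainder only at $u$. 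From these "attached through a single vertex" observations one checks that an edge of $A$ (resp. of $B^\prime$) is a cut edge of $G^\prime$ iff it is a cut edge of $A$ (resp. of $B^\prime$), and that every edge of $G^\prime$ lies in $A$, in $B^\prime$, or is the new pendent edge $uv$. Hence the cut edges of $G^\prime$ are precisely the $k-1$ cut edges of $G$ inside $A\cup B$ together with the newly added pendent edge $uv$, i.e. $k$ of them. This contradicts the maximality of $SO(G)$, so every cut edge of $G$ is pendent.

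I expect the only genuine obstacle to be the bookkeeping in this cut-edge count: one must argue carefully that contracting the bridge $uv$ neither merges nor destroys any of the remaining bridges, nor creates spurious new ones, which is exactly what the single-attachment-vertex remarks above supply; granting Lemma \ref{lemma22}, the rest is immediate.
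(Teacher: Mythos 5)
Your proposal is correct and follows essentially the same route as the paper: apply Lemma \ref{lemma22} to a non-pendent cut edge and contradict maximality. The only difference is that you explicitly verify that $G^\prime$ remains a connected graph of order $n$ with exactly $k$ cut edges, a point the paper's proof leaves implicit; your bookkeeping for that verification is sound.
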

\begin{proof}
Suppose, on the contrary, that $G$ contains a non-pendent cut edge $uv$. Let $G^\prime$  be the graph obtained by the contraction of $uv$ onto the vertex $u$ and adding a pendent vertex $v$ to $u$. Then  $SO(G)< SO(G^\prime)$ by Lemma \ref{lemma22}. Therefore, we have a contradiction to the assumption that $SO(G)$  is maximum  in the class of graphs of order $n$ with $k$ cut edges.
\end{proof}

Let $A=(a_1, a_2, \ldots, a_n)$ and $B=(b_1, b_2, \ldots, b_n)$  be  non-increasing  two sequences on an interval $I$ of real numbers such that $a_1+a_2+\cdots+a_n=b_1+b_2+\cdots+b_n$. If 
$$a_1+a_2+\cdots+a_i \geq b_1+b_2+\cdots+b_i ~~\text{for all}~~  1\leq i\leq n-1$$
 then we say that  $A$ majorizes  $B$.

\begin{lemma} {\rm (Karamata's inequality)} \label{lem-8}
	Let $f\colon I\rightarrow \mathbb{R}$ be a strictly convex function.  Let $A=(a_1, a_2, \ldots, a_n)$ and $B=(b_1, b_2, \ldots, b_n)$  be  non-increasing  sequences on $I$.  If $A$ majorizes  $B$ then
	$$f(a_{1})+f(a_2)+\cdots +f(x_{n})\geq f(b_{1})+f(b_2)+\cdots +f(b_{n})$$
with equality   if and only if  $a_i=b_i$ for all $ 1\leq i \leq n$.
\end{lemma}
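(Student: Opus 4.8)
The plan is to prove Karamata's inequality by the classical \emph{supporting-line plus Abel-summation} argument. Since $f$ is strictly convex on $I$, at every point $b\in I$ there is a slope $c(b)\in\mathbb{R}$ with $f(x)\ge f(b)+c(b)(x-b)$ for all $x\in I$, and $c$ may be chosen non-decreasing in its argument (for instance $c(b)=f'_+(b)$). Because $b_1\ge b_2\ge\cdots\ge b_n$, the numbers $c_i:=c(b_i)$ then form a \emph{non-increasing} sequence, and this is the one structural property that drives the whole argument.

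First I would record the pointwise estimate $f(a_i)-f(b_i)\ge c_i(a_i-b_i)$ for each $i$, valid since $a_i\in I$, and sum it over $i$ to get
\[
\sum_{i=1}^n f(a_i)-\sum_{i=1}^n f(b_i)\ \ge\ \sum_{i=1}^n c_i(a_i-b_i).
\]
Next I would introduce the partial sums $S_i:=\sum_{j=1}^i(a_j-b_j)$, so that $S_0=0$, $S_i\ge 0$ for $1\le i\le n-1$ by the majorization hypothesis, and $S_n=0$ by the equal-total-sum hypothesis. Summation by parts then gives
\[
\sum_{i=1}^n c_i(a_i-b_i)=\sum_{i=1}^n c_i(S_i-S_{i-1})=\sum_{i=1}^{n-1}(c_i-c_{i+1})S_i,
\]
and since $c_i-c_{i+1}\ge 0$ and $S_i\ge 0$ the right-hand side is non-negative. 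Chaining the two displays yields $\sum_i f(a_i)\ge\sum_i f(b_i)$.

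For the equality characterization I would invoke strict convexity once more: the supporting line at $b_i$ satisfies $f(x)>f(b_i)+c_i(x-b_i)$ for every $x\ne b_i$, so $f(a_i)-f(b_i)-c_i(a_i-b_i)\ge 0$ with equality exactly when $a_i=b_i$. If $\sum_i f(a_i)=\sum_i f(b_i)$, then the first inequality above is an equality, i.e.\ $\sum_i\big(f(a_i)-f(b_i)-c_i(a_i-b_i)\big)=0$ is a sum of non-negative terms, forcing each to vanish and hence $a_i=b_i$ for all $i$; the converse is trivial.

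The routine points to nail down are the existence of a non-decreasing selection of supporting slopes for a convex function on $I$ (standard, via one-sided derivatives) and the bookkeeping in the summation by parts. The only mildly delicate spot is confirming that "$c_i$ non-increasing'' survives ties among the $b_i$ and the case when some $b_i$ is an endpoint of $I$ — but using the one-sided derivative as $c(b)$ covers both, and in the applications of this paper $f$ is smooth so no subtlety arises; beyond that there is no real obstacle, the entire content being the convexity estimate together with the non-negativity of $\sum_{i=1}^{n-1}(c_i-c_{i+1})S_i$.
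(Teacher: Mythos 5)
The paper does not prove this lemma at all: it is quoted as the classical Karamata (majorization) inequality and used as a black box, so there is no in-paper argument to compare yours against. Your proof is the standard and correct one: supporting lines $f(x)\ge f(b_i)+c_i(x-b_i)$ with $c_i=f'_+(b_i)$ non-increasing because $b_1\ge\cdots\ge b_n$, followed by Abel summation of $\sum_i c_i(a_i-b_i)=\sum_{i=1}^{n-1}(c_i-c_{i+1})S_i\ge 0$ using $S_0=S_n=0$ and $S_i\ge 0$; the equality analysis via strictness of the supporting-line inequality is also handled correctly, since equality forces each non-negative term $f(a_i)-f(b_i)-c_i(a_i-b_i)$ to vanish and hence $a_i=b_i$. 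The only genuine caveat is the one you already flag: if some $b_i$ is an endpoint of $I$ where the one-sided derivative is infinite (e.g.\ $f(x)=-\sqrt{x}$ at $x=0$), the supporting line does not exist; for the function $f(x)=\sqrt{1+x^2}$ on $[0,+\infty)$ actually used in this paper, $f$ is smooth with bounded derivative, so no issue arises. In short, the proposal is correct and supplies a proof the paper omits.
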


\begin{theorem}\label{theorem42} Let  $G$ be a unicyclic graph of order $n$ with girth $g$. Then
\begin{equation}
SO(G)\leq 2\sqrt{(n-g+2)^2+4}+(n-g)\sqrt{(n-g+2)^2+1}+2\sqrt{2}(g-2)\label{eq8}
\end{equation}
with equality holds if and only if $G$ is isomorphic to the graph obtained by attaching $n-g$ pendent edges to a vertex of $C_{g}$.
\end{theorem}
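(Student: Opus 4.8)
The plan is to cut the class down to a one‑parameter family with the transformation lemmas, and then to bound $SO$ on that family by two applications of majorization. First I would fix a graph $G$ of maximum $SO$ in the class (it exists, the class being finite) and let $C$ be its unique cycle, which has length $g$. The edges off $C$ are precisely the cut edges of $G$. If one of them is a \emph{non-pendent} cut edge, I apply Lemma~\ref{lemma22}: the resulting $G'$ has $SO(G')>SO(G)$, and since the contracted edge lies off $C$ the contraction neither destroys nor creates a cycle and leaves $C$ intact, while attaching a pendent vertex creates no cycle; hence $G'$ is again unicyclic of order $n$ with girth $g$, contradicting maximality. So all cut edges of $G$ are pendent, i.e. $G$ is $C_g$ with $m:=n-g$ pendent edges distributed among its vertices. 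Write the cycle as $v_1v_2\cdots v_g$, let $p_i\ge 0$ be the number of pendent edges at $v_i$, so $d_G(v_i)=2+p_i$ and $p_1+\cdots+p_g=m$.

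Next I would split $SO(G)=S_1+S_2$, where
\[
S_1=\sum_{i=1}^{g}\sqrt{(2+p_i)^2+(2+p_{i+1})^2},\qquad
S_2=\sum_{i=1}^{g}p_i\sqrt{(2+p_i)^2+1}
\]
(indices modulo $g$), and bound each sum separately. For $S_1$, the elementary identity $(2+p_i+p_{i+1})^2+4-(2+p_i)^2-(2+p_{i+1})^2=2p_ip_{i+1}\ge 0$ gives $S_1\le\sum_{i=1}^{g}\sqrt{t_i^2+4}$ with $t_i:=2+p_i+p_{i+1}$; since $2\le t_i\le m+2$ for every $i$ and $\sum_i t_i=2g+2m$, the non‑increasing rearrangement of $(t_1,\dots,t_g)$ is majorized by $(m+2,\,m+2,\,2,\dots,2)$, so Lemma~\ref{lem-8} applied to the strictly convex $t\mapsto\sqrt{t^2+4}$ yields $S_1\le 2\sqrt{(m+2)^2+4}+2\sqrt2\,(g-2)$. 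For $S_2=\sum_{i=1}^{g}f(p_i)$ with $f(p)=p\sqrt{(p+2)^2+1}$ one checks $f''(p)>0$ on $[0,\infty)$, so $f$ is strictly convex; as $(m,0,\dots,0)$ majorizes $(p_1,\dots,p_g)$, Lemma~\ref{lem-8} gives $S_2\le f(m)+(g-1)f(0)=m\sqrt{(m+2)^2+1}$. Adding the two bounds and substituting $m=n-g$ gives the inequality (\ref{eq8}) for $G$, hence for every graph in the class.

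For equality, note that if $SO(G)$ meets the bound then both estimates above are tight; in particular $S_2=m\sqrt{(m+2)^2+1}$, so by the equality clause of Lemma~\ref{lem-8} — where strict convexity of $f$ is exactly what is needed — the sorted vector $(p_i)$ equals $(m,0,\dots,0)$, i.e. exactly one $p_j$ equals $m$. Thus $G$ arises by attaching all $n-g$ pendent edges to one vertex of $C_g$, and this graph is unique up to isomorphism by the symmetry of $C_g$; conversely, for it one computes directly $S_1=2\sqrt{(m+2)^2+4}+2\sqrt2\,(g-2)$ and $S_2=m\sqrt{(m+2)^2+1}$, so the bound is attained. (If $m=0$ then $G=C_n$ and the assertion reduces to $SO(C_n)=2\sqrt2\,n$.)

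The two points that need genuine care are: verifying that the contraction of Lemma~\ref{lemma22} preserves the girth (it does, because a cut edge never lies on the unique cycle), and establishing the majorization of $(t_1,\dots,t_g)$ by $(m+2,m+2,2,\dots,2)$ used to bound $S_1$ — this is the step that encodes the structural fact that only two of the $g$ cycle edges can be ``heavy'' in an extremal graph. Everything else is routine convexity bookkeeping.
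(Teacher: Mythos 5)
Your proposal is correct, and although it follows the paper's overall strategy (reduce to a cycle with pendent edges via Lemma~\ref{lemma22}, then invoke Karamata), the analytic core is genuinely different and noticeably cleaner. The paper first bounds the pendent-edge contribution crudely by $\sum_i(d_i-2)\sqrt{d_i^2+1}\le k\sqrt{(k+2)^2+1}$, rewrites each cycle-edge term as $d_i\sqrt{1+(d_{i+1}/d_i)^2}$, and then runs a \emph{single} majorization between two sequences of length $n+g$ whose entries are ratios $d_{i+1}/d_i$ repeated $d_i$ times; verifying that majorization occupies most of the proof, and strictness is extracted from the fact that the first replacement is already strict when $G$ is not the extremal graph. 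You instead split $SO(G)=S_1+S_2$, absorb each cycle term into $\sqrt{t_i^2+4}$ with $t_i=2+p_i+p_{i+1}$ via the identity $2p_ip_{i+1}\ge 0$ (the same algebraic fact that powers Lemma~\ref{lemma22}), and then need only two length-$g$ majorizations whose partial-sum checks are one-liners: $t_{(1)}\le m+2$ and $\sum_{j\le i}t_{(j)}\le 2m+2i$ because the remaining $t$'s are at least $2$, and $(m,0,\dots,0)$ trivially majorizes $(p_i)$. The convexity of $p\mapsto p\sqrt{(p+2)^2+1}$ checks out ($f''=2g'+pg''>0$), and the equality characterization falls out of the equality clause of Lemma~\ref{lem-8} applied to $S_2$ alone, which is tidier than the paper's handling. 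What the paper's route buys is a template reused almost verbatim for Theorem~\ref{theorem57}; what yours buys is brevity and transparency. The only things to state explicitly in a final write-up are (i) that any graph attaining the bound is a maximizer (the bound being attained) and hence has the pendent-on-cycle structure, and (ii) that contracting a bridge off the cycle preserves order, unicyclicity and the cycle itself --- both of which you already flag.
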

 \begin{proof}
Denote by $U_{n,g}$  the  graph obtained by attaching $n-g$ pendent edges to a vertex of $C_{g}$. If $G$ is isomorphic to  $U_{n,g}$ then the equality holds in (\ref{eq8}).
Suppose that $G$ is not isomorphic to  this graph and  $SO(G)$ is maximum among all unicyclic graphs of order $n$ with girth $g$. Then by Proposition \ref{prop1},  $G$ is isomorphic to a graph such that each pendent edge is attached to the unique cycle. Denote (in clockwise order) by $u_1, u_2, \ldots, u_g$ the vertices on the cycle.   Let $k$ be the number of pendent edges in  $G$.  For simplicity's sake we denote $d_G(u_i)=d_i$, $i=1, 2, \dots, g$.  Then, we have
\begin{equation}
2\leq d_i\leq k+2~~\text{and}~~d_1+d_2+\cdots+d_g=k+2g=n+g.\label{eq11}
\end{equation}

Consider  a non-increasing sequence $A=\{a_i\}$ with length $n+g$ as follows:
\begin{eqnarray}
\underbrace{\frac{k+2}{2},\dfrac{k+2}{2}}_{2},~\underbrace{1, 1, \dots, 1,}_{2g-4}\,~\underbrace{\frac{2}{k+2},\frac{2}{k+2},\dots,\frac{2}{k+2}}_{k+2}.\nonumber
\end{eqnarray}
Let $c_1, c_2, \ldots, c_g$ be a permutation of the sequence $d_1, d_2, \ldots, d_g$.
Then,  we consider  a non-increasing sequence $B=\{b_i\}$ with length $n+g$ as follows:
\begin{eqnarray}~\underbrace{\frac{c_2}{c_1},\dots,\frac{c_2}{c_1}}_{c_1},~\underbrace{\frac{c_3}{c_2},\dots,\frac{c_3}{c_2}}_{c_2},~\dots,
~\underbrace{\frac{c_{g}}{c_{g-1}},\dots,\frac{c_{g}}{c_{g-1}}}_{c_{g-1}},~\underbrace{\frac{c_1}{c_g},\dots,\frac{c_1}{c_g}}_{c_g},\nonumber
\end{eqnarray}
where for all $1\leq i\leq g$ there exists $j$ such that  $c_i/c_{i-1}=d_j/d_{j-1}$ with $c_0=c_g$ and $d_0=d_g$.

\vspace{3mm}
Now we prove that $A$ majorizes $B$.  Denote  $A_i=a_1+a_2+\cdots+a_i$ and $B_i=b_1+b_2+\cdots+b_i$ for $1\leq i \leq n+g$.   Then, one can easily see that $A_{n+g}=B_{n+g}=n+g$, $A_1\geq B_1$ and $A_2\geq B_2$ from (\ref{eq11}) because  $c_1, c_2, \ldots, c_g$ is a permutation of $d_1, d_2, \ldots, d_g$.

\vspace{3mm}
Suppose first that $3\leq i\leq 2g-2$. Then, we have $A_i=k+2+i-2=k+i$ and
$$B_i=c_2+c_3+\cdots+c_{s-1}+\frac{pc_{s}}{c_{s-1}}~~ \text{ for some positive integers}~s\text{ and}~p,$$
such that $c_1+\cdots+c_{s-2}+p=i$ and $p\leq c_{s-1}$. Therefore, we get
\begin{equation}
A_i-B_i=k+c_1+p-c_{s-1}-\frac{pc_{s}}{c_{s-1}}.\label{eq121}
\end{equation}
On the other hand,  for $1\leq i<j\leq g$ we have $d_i+d_j\leq k+4$ and it follows that $c_i+c_j\leq k+4$.
If $p\geq 2$, then $c_{s-1}+pc_{s}/c_{s-1}\leq c_{s-1}+c_{s}\leq k+4\leq k+c_1+p$ since $p\leq c_{s-1}$ and $c_1\geq 2$.  Therefore, we have $A_i\geq B_i$ from (\ref{eq121}). If $p=1$, from  (\ref{eq121}), we get
\begin{equation}
A_i-B_i\geq k+3-c_{s-1}-\frac{c_{s}}{2}=k+3+\frac{c_{s}}{2}-(c_{s-1}+c_{s})\geq 0\label{eq12}
\end{equation}
since $c_1, c_{s-1}, c_s\geq 2$ and $c_{s-1}+c_{s}\leq k+4$.

\vspace{3mm}
Suppose now that $2g-2< i\leq n+g$. Then since $A_{n+g}=n+g$,
\begin{equation}
A_i=n+g-(n+g-i)\cdot \frac{2}{k+2}.\label{eq13}
\end{equation}

Moreover, since $B_{n+g}=n+g$ and the sequence $B$ is non-increasing, we get
\begin{eqnarray}
B_{i}\leq n+g-(n+g-i)\frac{c_1}{c_g}.\label{eq14}
\end{eqnarray}
Therefore, from (\ref{eq13}) and  (\ref{eq14}) we get $A_i\geq B_i$  using $2\leq c_1, c_g\leq k+2$. Hence we conclude that $A$ majorizes $B$.

\vspace{3mm}
Now, we prove that $SO(G)<SO(U_{n,g})$ by using well-known Karamata's inequality. For this purpose, let us consider a function $f(x)=\sqrt{1+x^2}$  and  it is easy to see that this function is strictly convex for $x\in [0,+\infty)$. By the definition of $SO(G)$ and  $G$ is not isomorphic to  $U_{n, g}$, we obtain
\begin{eqnarray}
&&SO(G)=\nonumber\\
&&=(d_1-2)\sqrt{d_1^2+1}+\cdots+(d_g-2)\sqrt{d_g^2+1}
+\sqrt{d_1^2+d_2^2}+\cdots+\sqrt{d_{g-1}^2+d_g^2}+\sqrt{d_g^2+d_1^2}\nonumber\\
&&< k\sqrt{(k+2)^2+1}+d_1\sqrt{1+\left(\frac{d_2}{d_1}\right)^2}+\cdots+d_{g-1}\sqrt{1+\left(\frac{d_g}{d_{g-1}}\right)^2}+d_g\sqrt{1+\left(\frac{d_1}{d_g}\right)^2}\nonumber\\
&&= k\sqrt{(k+2)^2+1}+d_1f\left(\frac{d_2}{d_1}\right)+\cdots+d_{g-1}f\left(\frac{d_g}{d_{g-1}}\right)+d_g f\left(\frac{d_1}{d_g}\right)\label{eq15}
\end{eqnarray}
by  (\ref{eq11}). Without loss of generality we may assume that
$$\frac{d_2}{d_1}\geq \frac{d_3}{d_2} \geq \dots \geq \frac{d_g}{d_{g-1}} \geq  \frac{d_1}{d_g}.$$
Then we have  proved that $A$ majorizes the sequence
\begin{eqnarray}~\underbrace{\frac{d_2}{d_1},\dots,\frac{d_2}{d_1}}_{d_1},~\underbrace{\frac{d_3}{d_2},\dots,\frac{d_3}{d_2}}_{d_2},~\dots,
~\underbrace{\frac{d_{g}}{d_{g-1}},\dots,\frac{d_{g}}{d_{g-1}}}_{d_{g-1}},~\underbrace{\frac{d_1}{d_g},\dots,\frac{d_1}{d_g}}_{d_g}. \nonumber
\end{eqnarray}
Therefore from (\ref{eq15}), we get the required strict inequality in (\ref{eq8})
by Karamata's inequality.
\end{proof}

\begin{lemma} \label{lemma54} If $x\geq y\geq 0$ and $a\geq 1$  then
	\begin{eqnarray}(x+1)\sqrt{(x+a)^2+1}+y\sqrt{(y+a-1)^2+1}\geq x\sqrt{(x+a-1)^2+1}+(y+1)\sqrt{(y+a)^2+1}.\nonumber
	\end{eqnarray}
\end{lemma}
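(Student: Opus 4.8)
The plan is to reduce the two-variable inequality to a single-variable monotonicity statement. Rewrite the claimed inequality by collecting the terms that share the factor $x$ (resp. $y$) on one side. Grouping gives
\[
\Bigl[(x+1)\sqrt{(x+a)^2+1}-x\sqrt{(x+a-1)^2+1}\Bigr]\ \geq\ \Bigl[(y+1)\sqrt{(y+a)^2+1}-y\sqrt{(y+a-1)^2+1}\Bigr].
\]
So it suffices to show that, for fixed $a\ge 1$, the function
\[
\Phi_a(t)\ :=\ (t+1)\sqrt{(t+a)^2+1}\ -\ t\sqrt{(t+a-1)^2+1}
\]
is nondecreasing in $t$ on $[0,+\infty)$; then $x\ge y\ge 0$ immediately yields $\Phi_a(x)\ge\Phi_a(y)$, which is exactly the assertion.

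First I would introduce the auxiliary function $\psi(s):=\sqrt{s^2+1}$, which is smooth, convex, and increasing on $[0,+\infty)$, with $\psi'(s)=s/\sqrt{s^2+1}\in[0,1)$. Writing $\Phi_a(t)=(t+1)\psi(t+a)-t\,\psi(t+a-1)$, differentiate:
\[
\Phi_a'(t)=\psi(t+a)+(t+1)\psi'(t+a)-\psi(t+a-1)-t\,\psi'(t+a-1).
\]
Regroup this as $\bigl[\psi(t+a)-\psi(t+a-1)\bigr]+\bigl[(t+1)\psi'(t+a)-t\,\psi'(t+a-1)\bigr]$. The first bracket is nonnegative since $\psi$ is increasing. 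For the second bracket, write it as $t\bigl[\psi'(t+a)-\psi'(t+a-1)\bigr]+\psi'(t+a)$; since $\psi$ is convex, $\psi'(t+a)\ge\psi'(t+a-1)$, so $t\bigl[\psi'(t+a)-\psi'(t+a-1)\bigr]\ge 0$, and $\psi'(t+a)\ge 0$ as well. Hence $\Phi_a'(t)\ge 0$ for all $t\ge 0$, and $\Phi_a$ is nondecreasing, completing the proof.

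The main point to be careful about is the regrouping in the derivative: the naive split $(t+1)\psi'(t+a)-t\,\psi'(t+a-1)$ mixes a larger coefficient with a larger argument, and one must resist bounding it the wrong way. The clean fix is the decomposition $t[\psi'(t+a)-\psi'(t+a-1)]+\psi'(t+a)$, which turns both pieces manifestly nonnegative using only monotonicity of $\psi'$ (i.e., convexity of $\psi$) and $\psi'\ge 0$. No hypothesis beyond $a\ge 1$ is needed for $\Phi_a$ to be well defined on $[0,\infty)$ — we only need $t+a-1\ge 0$, which holds since $t\ge 0$ and $a\ge 1$ — and the inequality is strict whenever $x>y$ because $\psi$ is strictly increasing, so the first bracket in $\Phi_a'$ is strictly positive. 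An alternative, purely algebraic route would be to clear square roots, but that produces a high-degree polynomial inequality that is far messier than the one-line convexity argument above, so I would present the calculus proof.
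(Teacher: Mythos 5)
Your proof is correct and takes essentially the same route as the paper: both reduce the lemma to showing that $\phi_a(t)=(t+1)\sqrt{(t+a)^2+1}-t\sqrt{(t+a-1)^2+1}$ is increasing on $[0,+\infty)$ and verify this by differentiating. Your convexity-based regrouping of $\phi_a'(t)$ as $\bigl[\psi(t+a)-\psi(t+a-1)\bigr]+t\bigl[\psi'(t+a)-\psi'(t+a-1)\bigr]+\psi'(t+a)$ is in fact cleaner than the paper's chain of estimates (whose intermediate bound $\frac{(x+1)(x+a)}{\sqrt{(x+a)^2+1}}-\frac{x(x+a-1)}{\sqrt{(x+a-1)^2+1}}>\frac{a+2x}{\sqrt{(x+a-1)^2+1}}$ ignores that the first denominator is the larger one), but the underlying idea is identical.
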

\begin{proof} Let us consider a function
	\begin{eqnarray}
	\phi(x)=(x+1)\sqrt{(x+a)^2+1}-x\sqrt{(x+a-1)^2+1},~~~x\in[0,+\infty).\nonumber
	\end{eqnarray}
	Then, we have
	\begin{eqnarray}\phi^\prime(x)&=&\sqrt{(x+a)^2+1}+\frac{(x+1)(x+a)}{\sqrt{(x+a)^2+1}}-\sqrt{(x+a-1)^2+1}-\frac{x(x+a-1)}{\sqrt{(x+a-1)^2+1}}\nonumber\\
	&>&\frac{(x+1)(x+a)}{\sqrt{(x+a)^2+1}}-\frac{x(x+a-1)}{\sqrt{(x+a-1)^2+1}}\nonumber\\
	&>&\frac{a+2x}{\sqrt{(x+a-1)^2+1}}>0\nonumber
	\label{eq531}
	\end{eqnarray}
and it follows that $\phi(x)$ is an increasing function. Therefore we get the required  
inequality since $x\geq y$.	
\end{proof}

\begin{theorem}\label{theorem57} Let  $G$ be a connected graph of order $n$ with $k$ pendent vertices. Then 
	\begin{eqnarray}SO(G)\leq \frac{(n-k-2)(n-k-1)^2}{\sqrt{2}}+k\sqrt{(n-1)^2+1}+(n-k-1)\sqrt{(n-1)^2+(n-k-1)^2}\nonumber
	\end{eqnarray}
	with equality  if and only if $G$ is isomorphic to a graph obtained by attaching $k$ pendent edges to a vertex of $K_{n-k}$.
\end{theorem}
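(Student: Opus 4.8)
The plan is to pin down the extremal graph by two reductions and then read off the value of $SO$. Fix a connected graph $G$ of order $n$ with exactly $k$ pendent vertices for which $SO(G)$ is maximum; then $G$ has $n-k$ vertices of degree at least $2$. The first reduction uses two elementary facts: $SO$ strictly increases when a new edge is inserted between two non-adjacent vertices (the inserted edge contributes a positive summand and every edge incident to its two ends acquires a strictly larger one), and such an insertion leaves the number of pendent vertices unchanged whenever both endpoints already have degree at least $2$. Hence in the extremal $G$ the $n-k$ vertices of degree at least $2$ are pairwise adjacent, so they induce a $K_{n-k}$; and, for $n\ge 3$, the neighbour of every pendent vertex has degree at least $2$ and thus lies in this clique. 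Therefore $G$ arises from $K_{n-k}$ by attaching $k$ pendent edges to its vertices. Writing $k_1\ge k_2\ge\cdots\ge k_{n-k}\ge 0$ for the numbers of pendent edges at the clique vertices $v_1,\dots,v_{n-k}$, so that $d_G(v_i)=n-k-1+k_i$ and $\sum_i k_i=k$, it remains to prove that $SO$ is maximised precisely when $(k_1,\dots,k_{n-k})=(k,0,\dots,0)$.

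For the second reduction I would assume $n-k\ge 3$ (the degenerate cases $n-k\le 2$ are settled by a short direct check) and suppose two clique vertices $v_1,v_2$ with $k_1\ge k_2\ge 1$ both carry a pendent edge. Form $G'=G-pv_2+pv_1$ by transferring one pendent edge from $v_2$ to $v_1$; since $n-k\ge 3$, $v_2$ still has degree at least $2$ in $G'$, so $G'$ again lies in the class. Split $SO(G')-SO(G)$ into three parts: the pendent edges at $v_1$ and $v_2$; the edges joining $\{v_1,v_2\}$ to a third clique vertex $v_\ell$; and the single edge $v_1v_2$. The first part does not decrease, by Lemma~\ref{lemma54} applied with $x=k_1$, $y=k_2-1$, $a=n-k$: that lemma is exactly the inequality $(k_1+1)\sqrt{(n-k+k_1)^2+1}+(k_2-1)\sqrt{(n-k+k_2-2)^2+1}\ge k_1\sqrt{(n-k+k_1-1)^2+1}+k_2\sqrt{(n-k+k_2-1)^2+1}$, comparing the total pendent contribution of $v_1,v_2$ in $G'$ with that in $G$. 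The second part does not decrease because $t\mapsto\sqrt{t^2+d_G(v_\ell)^2}$ is strictly convex, $d_G(v_\ell)$ is unchanged, and $(d_G(v_1)+1,\,d_G(v_2)-1)$ majorizes $(d_G(v_1),d_G(v_2))$, so Lemma~\ref{lem-8} applies. The third part strictly increases, since $(d_G(v_1)+1)^2+(d_G(v_2)-1)^2-d_G(v_1)^2-d_G(v_2)^2=2\big(d_G(v_1)-d_G(v_2)\big)+2>0$. Hence $SO(G')>SO(G)$, contradicting maximality; so at most one clique vertex carries pendent edges, which identifies $G$ and yields the equality case as well. Finally, evaluating $SO$ on that graph is immediate: the $\binom{n-k-1}{2}$ inner clique edges, the $k$ pendent edges, and the $n-k-1$ clique edges at the hub contribute exactly the three summands of the bound.

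The delicate point is the first of these three parts: getting the bookkeeping of the pendent-edge contributions of $v_1$ and $v_2$ into the precise shape packaged by Lemma~\ref{lemma54}, and verifying that the transfer never drives a clique vertex down to degree $1$ — which is exactly why the hypothesis $n-k\ge 3$ must be isolated first. The remaining ingredients (monotonicity of $SO$ under edge addition, strict convexity of $\sqrt{t^2+c}$, and the two-term majorization) are routine. The small cases $n-k\in\{1,2\}$ deserve a separate line: for $n-k=1$ the graph is forced to be $K_{1,k}$, and for $n-k=2$ one checks the remaining double-star configurations directly.
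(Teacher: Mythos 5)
Your proposal is correct and follows the same overall strategy as the paper: first force the non-pendent vertices into a clique, then show a pendent edge can be profitably transferred to the vertex already carrying the most, using Lemma~\ref{lemma54} with exactly the same substitution ($x=k_1$, $y=k_2-1$, $a=n-k$ is the paper's $x=d_1-n+k+1$, $y=d_t-n+k$, $a=n-k$) and the same three-way split of $SO(G')-SO(G)$. Where you genuinely diverge is in the middle part of that split: for the edges joining $\{v_1,v_2\}$ to a third clique vertex $v_\ell$ you apply Karamata (Lemma~\ref{lem-8}) separately for each $\ell$ to the two-term sequences $(d_1+1,\,d_2-1)$ and $(d_1,d_2)$ against the convex function $t\mapsto\sqrt{t^2+d_\ell^2}$, whereas the paper builds two long sequences of length $2\sum_{i\neq 1,t}d_i$ with entries $\tfrac{d_1+1}{d_i},\tfrac{d_t-1}{d_i}$ versus $\tfrac{d_1}{d_i},\tfrac{d_t}{d_i}$ and asserts a global majorization with the phrase ``it is easy to see.'' Your per-vertex version is both more elementary and more obviously airtight, since the two-term majorization $d_1+1\geq d_1$ with equal sums is immediate, while the paper's interleaved long sequences require a nontrivial check that is not actually carried out. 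You also isolate the hypothesis $n-k\geq 3$ needed so that the donor vertex does not fall to degree one and leave the class; the paper silently ignores this, and indeed at $n-k=2$ the claimed extremal graph ($K_2$ with all $k$ pendent edges on one end) has $k+1$ pendent vertices and does not even lie in the class, so your proposed separate treatment of $n-k\in\{1,2\}$ is not mere pedantry but would surface a boundary defect in the statement itself. The remaining ingredients (edge addition increases $SO$, the count $\binom{n-k-1}{2}\sqrt{2}(n-k-1)^2\cdot\tfrac{1}{n-k-1}$ giving the first summand of the bound) match the paper.
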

\begin{proof} If $G$ is isomorphic to  a graph obtained by attaching $k$ pendent edges to a vertex of $K_{n-k}$, then  equality holds in the inequality of the statement of the theorem.  Suppose that $G$ is not isomorphic to  this graph and  $SO(G)$ is maximum among all  graphs of order $n$ with $k$ pendent vertices. Then by Proposition \ref{prop1},  $G$ is isomorphic to a graph such that each pendent edge is attached to the clique with $n-k$ vertices. Denote by $u_1, u_2, \ldots, u_{n-k}$ the vertices of the clique.   Denote $d_G(u_i)=d_i$, $i=1, 2, \dots, n-k$.  Without loss of generality we may assume that	
$d_1\geq d_2\geq \cdots \geq d_{n-k}.$ Then, we have 
\begin{equation}
d_1+d_2+\cdots+d_{n-k}=k+(n-k)(n-k-1)~~\text{and}~~n-k-1\leq d_i< n-1\label{eq001}
\end{equation}
Assume that $d_t=\min\{d_i\mid n-k-1< d_i<n-1\}$. Then there is a pendent edge $u_tx$ in $G$ and consider the graph $G^\prime= G-u_tx+u_1x$. 
If we set $x=d_1-n+k+1$, $a=n-k$ and $y=d_t-n+k$ in the inequality of the statement of Lemma \ref{lemma54}, then 
\begin{eqnarray}
(d_1-n+k+2)\sqrt{(d_1+1)^2+1}+(d_t-n+k)\sqrt{(d_t-1)^2+1}\nonumber\\
\geq (d_1-n+k+1)\sqrt{d_1^2+1}+(d_t-n+k+1)\sqrt{d_t^2+1}.\label{eqn1}
\end{eqnarray}
Therefore, we have	
	\begin{eqnarray}SO(G^\prime)-SO(G)&=&\sum_{i\neq 1, t}\sqrt{(d_1+1)^2+d_i^2}+\sum_{i\neq 1, t}\sqrt{(d_t-1)^2+d_i^2}+\sqrt{(d_1+1)^2+(d_t-1)^2}\nonumber\\
	&&+(d_1-n+k+2)\sqrt{(d_1+1)^2+1}+(d_t-n+k)\sqrt{(d_t-1)^2+1}\nonumber\\
    &&-\sum_{i\neq 1, t}\sqrt{d_1^2+d_i^2}-\sum_{i\neq 1, t}\sqrt{d_t^2+d_i^2}-\sqrt{d_1^2+d_t^2}\nonumber\\
    &&-(d_1-n+k+1)\sqrt{d_1^2+1}-(d_t-n+k+1)\sqrt{d_t^2+1}\nonumber\\
	&>&\sum_{i\neq 1, t}d_i\sqrt{1+\left(\frac{d_1+1}{d_i}\right)^2}+\sum_{i\neq 1, t}d_i\sqrt{1+\left(\frac{d_t-1}{d_i}\right)^2}\nonumber\\
	&&-\sum_{i\neq 1, t}d_i\sqrt{1+\left(\frac{d_1}{d_i}\right)^2}-\sum_{i\neq 1, t}d_i\sqrt{1+\left(\frac{d_t}{d_i}\right)^2} \label{eq511}
	\end{eqnarray}
by (\ref{eqn1}) and $\sqrt{(d_1+1)^2+(d_t-1)^2}\geq \sqrt{d_1^2+d_t^2}$.

\vspace{3mm}
	Consider non-increasing two sequences $A=\{a_i\}$ and $B=\{b_i\}$ as follows:
	\begin{eqnarray}
	A:&~ \underbrace{\frac{d_1+1}{d_{n-k}},\cdots, \frac{d_1+1}{d_{n-k}}}_{d_{n-k}}, \cdots, \underbrace{\frac{d_1+1}{d_{t+1}}, \cdots, \frac{d_1+1}{d_{t+1}}}_{d_{t+1}}, \underbrace{\frac{d_1+1}{d_{t-1}}, \cdots, \frac{d_1+1}{d_{t-1}}}_{d_{t-1}}, \cdots, \underbrace{\frac{d_1+1}{d_{2}}, \cdots, \frac{d_1+1}{d_{2}}}_{d_{2}},\nonumber\\
	&~ \underbrace{\frac{d_t-1}{d_{n-k}},\cdots, \frac{d_t-1}{d_{n-k}}}_{d_{n-k}}, \cdots, \underbrace{\frac{d_t-1}{d_{t+1}}, \cdots, \frac{d_t-1}{d_{t+1}}}_{d_{t+1}}, \underbrace{\frac{d_t-1}{d_{t-1}}, \cdots, \frac{d_t-1}{d_{t-1}}}_{d_{t-1}}, \cdots, \underbrace{\frac{d_t-1}{d_{2}}, \cdots, \frac{d_t-1}{d_{2}}}_{d_{2}},\nonumber\\
	B:&\underbrace{\frac{d_1}{d_{n-k}}, \cdots, \frac{d_1}{d_{n-k}}}_{d_{n-k}}, \cdots,  \underbrace{\frac{d_1}{d_{t+1}}, \cdots, \frac{d_1}{d_{t+1}}}_{d_{t+1}}, \underbrace{\frac{d_1}{d_{t-1}}, \cdots, \frac{d_1}{d_{t-1}}}_{d_{t-1}}, \cdots,  \underbrace{\frac{d_1}{d_{2}}, \cdots, \frac{d_1}{d_{2}}}_{d_{2}}, \nonumber\\
	&\underbrace{\frac{d_t}{d_{n-k}}, \cdots, \frac{d_t}{d_{n-k}}}_{d_{n-k}}, \cdots,  \underbrace{\frac{d_t}{d_{t+1}}, \cdots, \frac{d_t}{d_{t+1}}}_{d_{t+1}}, \underbrace{\frac{d_t}{d_{t-1}}, \cdots, \frac{d_t}{d_{t-1}}}_{d_{t-1}}, \cdots,  \underbrace{\frac{d_t}{d_{2}}, \cdots, \frac{d_t}{d_{2}}}_{d_{2}}.\nonumber
	\end{eqnarray}
	
Denote  $A_i=a_1+a_2+\cdots+a_i$ and $B_i=b_1+b_2+\cdots+b_i$ for $1\leq i\leq 2\sum_{i\neq 1, t}d_i$.  From the above, it is easy to see that both the summations of all elements of $A$ and $B$ are equal to $(n-k-2)(d_1+d_t)$, and $A_i\geq B_i$  for all $1\leq i\leq 2\sum_{i\neq 1, t}d_i$.  Hence $A$ majorizes $B$. 

On the other hand,   $f(x)=\sqrt{1+x^2}$ is  a strictly convex function  on  $[0,+\infty)$. Therefore,  using Karamata's inequality in  (\ref{eq511}), we get
$SO(G^\prime)>SO(G)$ and it contradicts  the fact that $SO(G)$ is maximum among all  graphs of order $n$ with $k$ pendent vertices. 
\end{proof} 

The same argument as in the proof of Theorem \ref{theorem57} yields the following result.
\begin{theorem}  If  $SO(G)$ is  maximum  in the class of connected graphs  of order $n$ with $r$ cut edges, then $G$ is isomorphic to the graph  obtained by attaching $r$ pendent edges to a vertex of $K_{n-r}$.
\end{theorem}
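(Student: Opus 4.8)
The plan is to follow the proof of Theorem~\ref{theorem57} almost word for word, with the constraint ``$r$ cut edges'' in place of ``$k$ pendent vertices''. Suppose $SO(G)$ is maximum in the class of connected graphs of order $n$ with $r$ cut edges. By Proposition~\ref{prop1} every cut edge of $G$ is a pendent edge, and since a pendent edge is always a cut edge, $G$ has exactly $r$ pendent vertices. Let $H$ be the connected graph on $m:=n-r$ vertices obtained by deleting the $r$ pendent vertices of $G$. I would first observe that $H$ is bridgeless: a bridge of $H$ would also be a bridge of $G$ (a vertex of degree one cannot lie on a cycle, so deleting such vertices turns no cycle of $G$ into a non-cycle, hence no non-bridge of $H$ into a bridge), and, having both endpoints in $H$, it would be non-pendent in $G$ --- contradicting Proposition~\ref{prop1}. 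Since $K_2$ has a bridge, this forces $m\ne 2$; and if $m=1$ then $G=K_{1,r}$ is already the claimed graph. So we may assume $m\ge 3$, and then $H$ is $2$-edge-connected.

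Next I would show $H\cong K_m$. If $H$ were not complete, I would add an edge joining two nonadjacent vertices of $H$. This strictly increases $SO$ --- the two endpoints each gain a unit of degree, so every term $\sqrt{d_G(\cdot)^2+d_G(\cdot)^2}$ on an edge at those vertices grows and one new positive term is created --- while $H$ remains $2$-edge-connected, so the modified graph still has precisely the $r$ original pendent edges as its cut edges. This contradicts maximality, so $G$ is $K_m$ together with $r$ pendent edges distributed among its vertices $u_1,\dots,u_m$; write $d_i:=d_G(u_i)=m-1+n_i$, where $n_i\ge 0$ is the number of pendent edges at $u_i$ and $\sum_i n_i=r$.

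It remains to prove that all $r$ pendent edges are incident to a single vertex. Suppose not, and order the $u_i$ so that $d_1\ge\cdots\ge d_m$; set $d_t=\min\{d_i: m-1<d_i<n-1\}$, a nonempty set since $G$ is not the claimed graph. Pick a pendent neighbour $x$ of $u_t$ and compare $G$ with $G'=G-u_tx+u_1x$, which is again a connected graph of order $n$ with $r$ cut edges. Exactly as in Theorem~\ref{theorem57}, Lemma~\ref{lemma54} applied with $x=d_1-m+1$, $a=m$, $y=d_t-m$, together with $\sqrt{(d_1+1)^2+(d_t-1)^2}\ge\sqrt{d_1^2+d_t^2}$, bounds $SO(G')-SO(G)$ below by the difference between $\sum_{i\ne 1,t}d_i\,f\!\big((d_1+1)/d_i\big)+\sum_{i\ne 1,t}d_i\,f\!\big((d_t-1)/d_i\big)$ and $\sum_{i\ne 1,t}d_i\,f\!\big(d_1/d_i\big)+\sum_{i\ne 1,t}d_i\,f\!\big(d_t/d_i\big)$, with $f(x)=\sqrt{1+x^2}$. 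Forming the two non-increasing sequences $A$ and $B$ exactly as in that proof and checking (as there) that $A$ majorizes $B$, Karamata's inequality (Lemma~\ref{lem-8}) gives $SO(G')>SO(G)$, contradicting maximality. Hence $G$ is obtained by attaching all $r$ pendent edges to one vertex of $K_{n-r}$.

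The main obstacle is the opening reduction rather than the final inequality: one must check carefully that deleting the pendent vertices yields a genuinely $2$-edge-connected ``core'' $H$, and that neither the edge-insertion move nor the pendent-edge-sliding move changes the number of cut edges --- equivalently, that neither move converts a non-bridge into a bridge or vice versa. Once $G$ has been reduced to ``$K_m$ with pendent edges attached'', the rest is literally the Karamata computation of Theorem~\ref{theorem57} with $k$ replaced by $r$, so no genuinely new estimate is required.
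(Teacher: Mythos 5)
Your proof is correct and follows exactly the route the paper intends: the paper offers no argument for this statement beyond the remark that the same argument as in the proof of Theorem~\ref{theorem57} yields it, and your write-up carries that out, additionally supplying the steps left implicit there (that Proposition~\ref{prop1} applies verbatim to the class with $r$ cut edges, that the bridgeless core must be a complete graph, and that the edge-adding and pendent-edge-sliding moves preserve the number of cut edges). Nothing further is needed.
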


\noindent
\textbf{Acknowledgment:}
The authors would like to express our very great appreciation to Prof. Ivan Gutman for introducing this topic and sending the paper \cite{G}. The first author is grateful  for the financial support of MNUE. The second author was supported by Project of Inner Mongolia University for Nationalities Research Funded Project (NMDYB17155).

\baselineskip=0.29in


\begin{thebibliography}{99}
	
	
\baselineskip=0.20in 

\bibitem {ET}  M. Enteshari, B. Taeri: Extremal Zagreb indices of graphs of order $n$ with $p$ pendent vertices, {\it MATCH Commun. Math. Comput. Chem.} {\bf 86}\,(2021) in press.

\bibitem {G} 	I. Gutman,  Geometric approach to degree-based topological indices: Sombor indices, {\it MATCH Commun. Math. Comput. Chem.} {\bf 86}\,(2021) 11--16.

\bibitem {HXBD} B. Horoldagva, C. Xu, L. Buyantogtokh, S. Dorjsembe, Extremal graphs with respect to the multiplicative sum Zagreb index, {\it MATCH Commun. Math. Comput. Chem.} {\bf 84} (2020) 773--786.

\bibitem {HSBD} B. Horoldagva, T. Selenge, L. Buyantogtokh, S. Dorjsembe, Upper bounds for the reduced second Zagreb index of graphs, {\it Trans. Comb.} (2021) in press. doi:10.22108/toc.2020.125478.1774
\end{thebibliography}
\end{document}